%
%
%
%
%
\documentclass{svjour3}                     
\smartqed  
\usepackage{graphicx}
%
%
%
%
%
\usepackage[latin1]{inputenc}
\usepackage{amsfonts,dsfont}
\usepackage{amssymb,bm}
\usepackage{amsmath,enumerate,rotate}
\usepackage{amssymb,latexsym}
\usepackage{epsfig}
\usepackage{graphicx}
\usepackage[english]{babel}
\usepackage{color}




\usepackage{verbatim}

\newcommand{\E}{\mathds{E}}

\newcommand{\ddr}{\mathrm{d}}
\newcommand{\edr}{\mathrm{e}}

\newtheorem{thm}{\noindent Theorem}

\newtheorem{prop}{\noindent Proposition}

\begin{document}

\title{Limiting behavior of the search cost distribution for the move-to-front rule in the stable case
}

\titlerunning{Limiting behavior of the search cost distribution in the stable case}        

\author{F. Leisen \and A. Lijoi \and C. Paroissin
}


\institute{Fabrizio Leisen \at
              Universidad de Navarra, Faculdad de Ciencias Economicas y Empresariales, Campus Universitario, edificio de biblioteca (entrada este), 31080, Pamplona, Spain.               \\
\email{fabrizio.leisen@gmail.com} 
           \and
      Antonio Lijoi \at
              Universit\`a degli Studi di Pavia, Dipartimento di Economia Politica, Via San
Felice 5, 27100 Pavia, Italy. \\
              \email{lijoi@unipv.it} \at
Collegio Carlo Alberto, Via Real Collegio 30, 10024 Moncalieri, Italy. \\
          \and
Christian Paroissin\at
Universit\'e de Pau et des Pays de l'Adour, Laboratoire de Math\'ematiques et de leurs Applications - UMR CNRS 5142, Avenue de l'Universit\'e,
64013 Pau cedex, France.\\
\email{christian.paroissin@univ-pau.fr}
}

\date{Received: date / Accepted: date}

\maketitle

\begin{abstract}
Move-to-front rule is a heuristic updating a list of $n$ items according to requests. Items are required with unknown probabilities (or popularities). The induced Markov chain is known to be ergodic \cite{donnelly}. One main problem is the study of the distribution of the search cost defined as the position of the required item. Here we first establish the link between two recent papers \cite{bp,lp} that both extend results proved by Kingman \cite{kingman} on the expected stationary search cost. Combining results contained in these papers, we obtain the limiting behavior for any moments of the stationary seach cost as $n$ tends to infinity.

\keywords{normalized random measure \and random discrete distribution\and stable subordinator \and problem of heaps}
 \subclass{MSC 60G57 \and MSC 60G51 \and MSC 60G52}
\end{abstract}

\section{Introduction}
\label{sec:intro}

The heaps problem was first considered, in independent works, by Tsetlin \cite{tsetlin} and McCabe \cite{McCabe}. Its basic description can be given as follows. Consider a collection of $n$ items stored into a list or heap and each of them is identified by a label. Hence, the objects can be described by the set $I= \{ 1, \ldots, n\}$. The probability that the $i$--th item is requested by a user is denoted by $p_i$, for $i=1,\ldots,n$. Hence $p_i \geqslant 0$, for any $i$, and $\sum_{i=1}^{n}p_i=1$. At each unit of time, an item is requested and it is searched for through the heap, starting at the top. Once it is found, it is moved to the top of the heap. The search cost is the position of the requested item in the heap or, equivalently, the number of items to be removed from the heap in order to find the requested one. In this setting, it might be of interest to determine the distribution of the search cost when the underlying Markov chain is at equilibrium.
\\[1ex]
Kingman \cite{kingman} first studied the case of random request probabilities, or random popularities. His paper develops two important cases where request probabilities are defined in terms of: (a) the normalized increments of a $\gamma$-stable subordinator; (b)  the Dirichlet distribution on the simplex. The results contained therein provide an exact analytic evaluation of the expected search cost either for any finite $n$ or in the limit, as the number of items $n$ tends to infinity. In particular, in the case of normalized $\gamma$-stable request probabilities, it is found that the limiting expected search cost is finite if and only if $\gamma<1/2$.  
\\[1ex]
These results have been recently extended in two independent papers. In \cite{lp} Lijoi and Pr\"unster  studied the case of request probabilities derived from a normalized random measures with independent increments, which generalizes the result obtained by Kingman \cite{kingman}. In \cite{bp} Barrera and Paroissin  studied the case of request probabilities based on exchangeable random partitions.
\\[1ex]
It is to be emphasized that all previous contributions on the subject is confined to the determination of the first moment of the stationary search cost. Here we wish to extend earlier work and determine the expression of the limiting moments of any order in the $\gamma$--stable case. In particular, it will be shown that the $k$--th moment exists if and only if $\gamma<1/(k+1)$ which reduces to the condition provided by \cite{kingman} when $k=1$. See also \cite{lp}. The outline of the paper is as follows. In Section~2 we provide a concise introduction to some basic tools and notions that will be relevant for achieving the main result in Section~3.

\section{The $\gamma$-stable model}
\label{sec:gamma_model}

Before stating and proving our result, it might be worth recalling the main ingredients that define the model we are going to use. As mentioned in the previous section, the request probabilities $p_i$, for $i=1,\ldots,n$, are going to be random. Indeed, if $(w_i)_{i\ge 1}$ is a sequence of positive independent random variables and $W_n=\sum_{i=1}^n w_i$, one can define
\[
p_i=\frac{w_i}{W_n} \qquad i=1,\ldots,n
\]  
Hence, $(p_1,\ldots,p_n)$ is an exchangeable random partition of the unit interval. A possible choice is $w_i:=\xi_{t_{i}}-\xi_{t_{i-1}}$ where $0=t_0< t_1<\,\cdots\,<t_n=1$ and $\xi=\{\xi_t:\: t\in[0,1]\}$ is a subordinator that is a process with almost surely increasing paths and with independents increments. In this case, one can express the the Laplace transform of $w_i$ in terms of the L\'evy intensity $\nu$ of $\xi$. In other words
\begin{equation}
  \label{eq:lapl_w}
  \phi_i(s):=\E\left[\edr^{-s w_i}\right]=\exp\left\{-(t_i-t_{i-1})\int_0^\infty\left[1-\edr^{-sy}\right]\,
\nu(\ddr y)\right\}
\end{equation}
with $\nu$ such that $\int_0^\infty\min\{1,y\}\,\nu(\ddr y)<\infty$. According to the terminology set forth in \cite{rlp}, $(p_1,\ldots,p_n)$ defines a normalized random measure with independent increments (NRMI). \\[1ex] 
Lijoi and Pr\"unster in \cite{lp} considered this general construction to determine an expression of the expected value of the search cost $S_n$. In the special case where 
\begin{equation}
  \label{eq:gamma_st_levy}
  \nu(\ddr y)=\frac{\gamma}{\Gamma(1-\gamma)}\: y^{-1-\gamma}\:\ddr y \qquad \gamma\in(0,1)
\end{equation}
they recovered an expression of the limiting expected search cost, as $n$ tends to infinity, thus recovering a result proved by \cite{kingman}. Note that if $\nu$ is as in \eqref{eq:gamma_st_levy}, then $\phi_i(s)=\exp\{-(t_i-t_{i-1})s^\gamma\}$ for any $s\ge 0$.
\\[1ex]
Barrera and Paroissin \cite{bp} have been able to determine an integral representation for the Laplace transform $\phi_{S_n}$ of the search cost $S_n$ in terms of the Laplace transforms $\phi_i$ of the single random weights $w_i$. In doing so they rely on results proved by Fill and Holst \cite{fh}. The expression they obtain is, then, used to derive a formula for the first two moments. From these formulas, they get an asymptotic equivalent for the Laplace transform of $S_n$ and the limit of the two first moments. Only this last point needs the assumption that the expectation of $S_n$ is finite. Two examples are studied: the case of deterministic weight and the case of gamma weight, which corresponds to the Dirichlet partition. Notice that, for this case, some limiting results were proved with an alternative way in \cite{bhp}. The limiting distribution has been also derived in the general iid case provided that the expectation $\mu_i$ of $w_i$ is finite \cite{bhp2}.
\\[1ex]
In the following section we will undertake the approach developed in \cite{bp} and determine the $k$--th moment of $S_n$ by working directly on $\phi_{S_n}$.

\section{Moments of the stationary search cost}
\label{sec:mom2}

The main tool we are relying on for the evaluation of $\E[S_n^k]$ is the Laplace transform of $S_n$ as displayed in theorem~2.2 of \cite{bp} and recalled here below.

\begin{thm}
For a sequence $(w_i)_{i\ge 1}$ of independent random variables
\begin{equation}\label{eqn:lt}
\phi_{S_n}(s) = \sum_{i=1}^n \int_0^\infty \left( \int_t^\infty \phi_i''(r) \prod_{j \neq i}h_{t,s,j}(r) \,\ddr r \right) \,\ddr t \;,
\end{equation}
for all $s \geqslant 0$, where for all $j \in \{1, \ldots, n\}$,
$$
h_{t,s,j}(r) = \phi_j(r)+e^{-s} (\phi_j(r-t)-\phi_j(r)) \;, \quad t \geqslant 0, r \geqslant 0 \;.
$$
\end{thm}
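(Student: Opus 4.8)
The plan is to derive the Laplace transform formula \eqref{eqn:lt} by conditioning on the stationary state of the move-to-front Markov chain and exploiting the structure established by Fill and Holst \cite{fh}. First I would recall that, at equilibrium, the search cost $S_n$ equals the number of items sitting above the requested item in the list, and that — conditionally on the request probabilities — the list configuration at stationarity admits a well-known description: if item $I$ is requested, the set of items above it consists precisely of those items $j\neq I$ whose most recent request occurred \emph{after} the most recent request of $I$. Introducing independent exponential ``clocks'' (a standard Poissonization of the request stream), one represents the most recent request time of item $j$ before the current epoch by an independent random variable governed by $w_j$, and the event $\{j \text{ is above } I\}$ becomes a comparison between two independent times. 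This is the route that turns the combinatorial statement into an integral over the subordinator-type weights.

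Next I would compute $\E[x^{S_n}]$ (equivalently $\phi_{S_n}(s)$ with $x=e^{-s}$) by writing $S_n = \sum_{j\neq I} \mathbf{1}\{j \text{ above } I\}$ and using conditional independence of the indicator events given the clock of the requested item $I$. Conditioning on $I=i$ (which carries weight proportional to $w_i$, hence the $\phi_i''$ that arises after the appropriate integration by parts in the time variable) and on the ``age'' $t$ of $i$'s last request, each factor $j\neq i$ contributes $\E[x^{\mathbf{1}\{\cdots\}}\mid t] = 1-(1-x)\,\Pe(j\text{ above }i\mid t)$, and a direct computation of that probability in terms of $\phi_j$ evaluated at $r$ and at $r-t$ yields exactly $h_{t,s,j}(r)$. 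Assembling the product over $j\neq i$, integrating over the clock variable $r$ of $i$ from $t$ to $\infty$, integrating over $t$, and summing over $i$ produces \eqref{eqn:lt}. The factor $\phi_i''$ specifically comes from differentiating the survival/density relations twice: once because the requested item's last-request time must be matched against the others, and once from the normalization $p_i = w_i/W_n$, which Fill and Holst's lemma encodes through a second derivative of the Laplace transform.

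I would present the argument in the order: (i) recall the stationary description of the list and the Poissonized clock representation; (ii) express $S_n$ as a sum of indicators and condition on $(I,t)$; (iii) evaluate the per-item generating factor and identify $h_{t,s,j}$; (iv) integrate and sum, performing the integration by parts that yields $\phi_i''$. Throughout I would only need that the $w_i$ are independent with finite mean (so $W_n$ is well-behaved and the relevant interchanges of integral and expectation are justified by Fubini and dominated convergence), which matches the hypothesis of the theorem.

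The main obstacle I anticipate is step (iv): justifying the integration by parts that converts the natural first-passage expression into one featuring $\phi_i''$, and simultaneously controlling the boundary terms at $t=0$, $r=t$, and $r=\infty$. One must check that $\phi_i'(r)\prod_{j\neq i}h_{t,s,j}(r)$ vanishes at the endpoints and that $\phi_i''$ is integrable against the bounded product $\prod_{j\neq i}h_{t,s,j}$ — here the bound $0\le h_{t,s,j}(r)\le 1$ (valid since $\phi_j$ is completely monotone and $s\ge 0$) is what saves the day, reducing everything to integrability of $\phi_i''$, which holds because $\phi_i$ is the Laplace transform of a positive random variable with enough smoothness. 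Once these analytic points are settled, the identity \eqref{eqn:lt} follows by bookkeeping; this is essentially the content of theorem~2.2 of \cite{bp}, whose proof I would follow in outline rather than reprove in full.
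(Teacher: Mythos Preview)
The paper does not prove this theorem at all: it is quoted verbatim as ``theorem~2.2 of \cite{bp}'' and used as a black box for the subsequent moment computations. So there is no proof in the paper to compare your proposal against; your sketch is a reasonable outline of the Fill--Holst/Barrera--Paroissin argument behind the cited result, but for the purposes of this paper the correct ``proof'' is simply a citation.
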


Using \eqref{eqn:lt}, we are able to compute moments of any order of the search cost $S_n$. Before doing so we need to introduce the quantity
\begin{multline}
  \label{eq:derivatives}
  M_{k,n}(s):=\edr^{-ks}\sum_{i\ne i_1\ne\,\cdots\,\ne i_k}\int_0^\infty\ddr t\,\int_t^\infty \ddr r \:\phi''_i(r)\,\prod_{l=1}^k (\phi_{i_l}(r-t)-\phi_{i_l}(r))\\
\times\prod_{j\not\in\{ i,i_1,\dots i_k\}}[\phi_j(r)+e^{-s}(\phi_j(r-t)-\phi_j(r))]
\end{multline} 
whose values, at $s=0$, will determine the moments of $S_n$.

\begin{prop}\label{crucial}
If the $(p_1,\ldots,p_n)$ are determined by normalizing the increments of a $\gamma$-stable subordinator with $t_i-t_{i-1}=1/n$ in \eqref{eq:lapl_w} for each $i \in \{1, \ldots, n\}$, then
$$
\lim_{n \rightarrow \infty} M_{k,n}(0) = \left\{ \begin{array}{cc}
\frac{(k!)^2}{(\frac{1}{\gamma}-k-1)_{k}} & \:\:\:{\mbox{\rm if }} \gamma < \frac{1}{k+1} \\[7pt]
\infty & \:\:\:{\mbox{\rm otherwise}}
\end{array}\right.
$$
where $(a)_k=\Gamma(a+k)/\Gamma(a)$ is the $k$--th ascending factorial of $a$.
\end{prop}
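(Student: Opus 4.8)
The plan is to specialise \eqref{eq:derivatives} to the $\gamma$-stable model, reduce $M_{k,n}(0)$ to a plain double integral carrying an explicit $n$-dependent prefactor, and then pass to the limit by dominated convergence (with Fatou's lemma covering the divergent regime). Since $t_i-t_{i-1}=1/n$ for every $i$, all the $\phi_i$ coincide with $\phi(s):=\exp\{-s^\gamma/n\}$. At $s=0$ the bracket $[\phi_j(r)+e^{-s}(\phi_j(r-t)-\phi_j(r))]$ in \eqref{eq:derivatives} collapses to $\phi(r-t)$, so the summand no longer depends on $(i,i_1,\dots,i_k)$ and the sum over the $n(n-1)\cdots(n-k)$ pairwise distinct ordered $(k+1)$-tuples merely reproduces that factor. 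Interchanging the $t$- and $r$-integrations and substituting $v=r-t$ (so that the inner integral runs over $v\in[0,r]$) leaves
\[
M_{k,n}(0)=n(n-1)\cdots(n-k)\int_0^\infty \phi''(r)\int_0^r (\phi(v)-\phi(r))^k\,\phi(v)^{\,n-k-1}\,\ddr v\,\ddr r .
\]

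Next I would change variables via $\alpha=r^\gamma$, $\beta=v^\gamma$, so that $\ddr r=\gamma^{-1}\alpha^{1/\gamma-1}\ddr\alpha$, $\ddr v=\gamma^{-1}\beta^{1/\gamma-1}\ddr\beta$, $\phi(r)=e^{-\alpha/n}$, $\phi(v)^{\,n-k-1}=e^{-(n-k-1)\beta/n}$, and a one-line computation gives $\phi''(r)=\gamma n^{-1}\alpha^{1-2/\gamma}\bigl((1-\gamma)+\gamma\alpha/n\bigr)e^{-\alpha/n}$. The powers of $n$ then combine into a single prefactor, yielding
\begin{multline*}
M_{k,n}(0)=\frac{n(n-1)\cdots(n-k)}{\gamma\,n}\int_0^\infty\!\!\int_0^\alpha \alpha^{-1/\gamma}\beta^{1/\gamma-1}\Bigl((1-\gamma)+\tfrac{\gamma\alpha}{n}\Bigr)\\
\times\bigl(e^{-\beta/n}-e^{-\alpha/n}\bigr)^k\,e^{-\alpha/n}\,e^{-(n-k-1)\beta/n}\,\ddr\beta\,\ddr\alpha ,
\end{multline*}
the integrand being nonnegative on $\{0\le\beta\le\alpha\}$. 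For each fixed $0<\beta<\alpha$ one has $n(n-1)\cdots(n-k)/(\gamma n)\sim n^{k}/\gamma$, $e^{-\beta/n}-e^{-\alpha/n}=\tfrac{\alpha-\beta}{n}(1+o(1))$, $e^{-(n-k-1)\beta/n}\to e^{-\beta}$ and $e^{-\alpha/n}\to1$, so the prefactor times the integrand converges pointwise to
\[
F(\alpha,\beta):=\frac{1-\gamma}{\gamma}\,\alpha^{-1/\gamma}\beta^{1/\gamma-1}(\alpha-\beta)^k e^{-\beta}.
\]

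To make the limit rigorous I would bound the prefactor-times-integrand uniformly in $n$: from the mean value theorem $0\le e^{-\beta/n}-e^{-\alpha/n}\le(\alpha-\beta)/n$; bounding $xe^{-x}\le1/e$ gives $\bigl((1-\gamma)+\gamma\alpha/n\bigr)e^{-\alpha/n}\le(1-\gamma)+\gamma/e\le1$; and $e^{-(n-k-1)\beta/n}\le e^{-\beta/2}$ once $n\ge2(k+1)$; together with $n(n-1)\cdots(n-k)\le n^{k+1}$ these produce the $n$-free bound $G(\alpha,\beta):=\gamma^{-1}\alpha^{-1/\gamma}\beta^{1/\gamma-1}(\alpha-\beta)^k e^{-\beta/2}$. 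The decisive point is that $G$ is integrable on $\{0\le\beta\le\alpha\}$ \emph{precisely when} $\gamma<1/(k+1)$: integrating first in $\alpha$ (put $\alpha=\beta s$, then $s=1/u$) gives $\int_\beta^\infty\alpha^{-1/\gamma}(\alpha-\beta)^k\,\ddr\alpha=\beta^{\,k+1-1/\gamma}B\!\left(\tfrac1\gamma-k-1,k+1\right)$, which is finite iff $\tfrac1\gamma-k-1>0$, and then $\int_0^\infty\beta^{1/\gamma-1}\beta^{\,k+1-1/\gamma}e^{-\beta/2}\,\ddr\beta=\int_0^\infty\beta^k e^{-\beta/2}\,\ddr\beta<\infty$. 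Hence, for $\gamma<1/(k+1)$, dominated convergence yields $M_{k,n}(0)\to\int\!\!\int F$, and the same Beta/Gamma computation (now with $e^{-\beta}$) gives $\int\!\!\int F=\tfrac{1-\gamma}{\gamma}(k!)^2\Gamma(\tfrac1\gamma-k-1)/\Gamma(\tfrac1\gamma)$; using $\Gamma(\tfrac1\gamma)=(\tfrac1\gamma-1)\Gamma(\tfrac1\gamma-1)$ and $\tfrac{1-\gamma}{\gamma}=\tfrac1\gamma-1$ this equals $(k!)^2\Gamma(\tfrac1\gamma-k-1)/\Gamma(\tfrac1\gamma-1)=(k!)^2/(\tfrac1\gamma-k-1)_k$, as claimed. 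When $\gamma\ge1/(k+1)$ the pointwise limit $F$ is unchanged but $\int\!\!\int F=\infty$ (the Beta integral above diverges), so Fatou's lemma applied to the nonnegative integrands forces $\liminf_n M_{k,n}(0)\ge\int\!\!\int F=\infty$.

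The step I expect to be the main obstacle is precisely this uniform domination: the combinatorial prefactor $n(n-1)\cdots(n-k)\sim n^{k+1}$ diverges and must be absorbed \emph{exactly} by the $n^{-k}$ coming from $(\phi(v)-\phi(r))^k\sim((\alpha-\beta)/n)^k$ and the $n^{-1}$ coming from $\phi''$, leaving a bound $G$ whose integrability threshold has to coincide with $\gamma=1/(k+1)$ — which is what generates the stated dichotomy. The change of variables $\alpha=r^\gamma$ is the device that makes this bookkeeping transparent, since it confines the whole $n$-dependence to the explicit prefactor and the two exponentials $e^{-\alpha/n}$ and $e^{-(n-k-1)\beta/n}$.
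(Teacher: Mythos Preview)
Your argument is correct and follows a genuinely different, more elementary route than the paper's. The paper does not pass to the limit under the integral sign: after the same initial reduction it performs an integration by parts in $t$, expands $(\phi(r)-\phi(r+t))^{k-1}$ via binary indices, changes variables to $((r+t)^\gamma,r^\gamma)$, and then evaluates the resulting double integral for \emph{finite} $n$ using tabulated identities (an incomplete-gamma/Whittaker formula and a Whittaker-to-${}_2F_1$ integral from Gradshteyn--Ryzhik), arriving at $M_{k,n}(0)$ as a combinatorial prefactor times a finite alternating sum of values ${}_2F_1(2,1;1+1/\gamma;1-(2+r)/n)$. A further series manipulation isolates the leading $n^{-(k-1)}$ term, whose coefficient involves ${}_2F_1(k+1,k;k+1/\gamma;1)$; the Gauss summation theorem then gives the stated limit when $\gamma<1/(k+1)$ and divergence otherwise. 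Your approach trades all of this special-function machinery for a single dominating function and a Beta/Gamma computation: the change of variables $(\alpha,\beta)=(r^\gamma,v^\gamma)$ makes the $n$-bookkeeping transparent, and the threshold $\gamma=1/(k+1)$ appears directly as the integrability condition for $\int_\beta^\infty \alpha^{-1/\gamma}(\alpha-\beta)^k\,\ddr\alpha$. The paper's route has the advantage of producing an exact closed form for $M_{k,n}(0)$ at each finite $n$ (potentially useful for rates or higher-order asymptotics), whereas yours is shorter, self-contained, and makes the dichotomy structurally obvious without recourse to integral tables.
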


\begin{proof}
Note first that $\phi_i(s)=\exp\{-s^\gamma/n\}$ for any $s\ge 0$. Moreover
\begin{align*}
M_{k,n}(0)
&= \sum_{i\ne i_1\ne \cdots \ne i_k} \int_0^{\infty}\int_t^{\infty}\phi''_i(r)\:\prod_{l=1}^k(\phi_{i_l}(r-t)-\phi_{i_l}(r))\:
\prod_{j\not\in \{i,i_1,\dots i_k\}}\phi_j(r-t)\:\ddr r\,\ddr t\\
&=\sum_{i\ne i_1\ne \cdots \ne i_k} \int_0^{\infty}\int_0^{\infty}\phi''_i(r+t)\:\prod_{l=1}^k(\phi_{i_l}(r)-\phi_{i_l}(r+t))\:
\prod_{j\not\in \{i,i_1,\dots i_k\}}\phi_j(r)\:\ddr r\,\ddr t\\
&=\sum_{i\ne i_1\ne \cdots \ne i_k}
\int_0^{\infty}\prod_{j\not\in\{i,i_1,\dots i_k\}}\phi_j(r)\int_0^{\infty}\phi'_i(r+t)\,
\sum_{l=1}^{k}\phi'_{i_l}(r+t)\\
&\quad\qquad \times\:\prod_{\substack{m=1\\m\neq l}}^k(\phi_{i_m}(r)-\phi_{i_m}(r+t))\ddr t\ddr r
\end{align*}
Taking into account the form of $\phi_i$ in the $\gamma$--stable case, one has
\begin{align*}
\prod_{\substack{m=1\\m\neq l}}^k 
 (\phi_{i_m}(r)-\phi_{i_m}(r+t))
&=\sum_{\bm{a}_l\in\{0,1\}^{k-1}}\:
\prod_{\substack{m=1\\m\neq l}}^k(-1)^{a_m}\phi_{i_m}^{a_m}(r+t)\phi_{i_m}^{1-a_m}(r)\\
&=\sum_{\bm{a}_l\in\{0,1\}^{k-1}}(-1)^{|\bm{a}_l|}\edr^{-\frac{(r+t)^{\gamma}}{n} |\bm{a}_l|}\,
\edr^{-\frac{r^{\gamma}}{n} (k-1-|\bm{a}_l|)}
\end{align*}
where $\bm{a}_l=(a_1,\ldots,a_{l-1},a_{l+1},\ldots,a_k)$ and $|\bm{a}_l|=\sum_{m\ne l}a_m$. Summing up, in the $\gamma$--stable case one has
\begin{align*}
M_{k,n}(0)
&=\frac{\gamma^2}{n^2}\,
\sum_{i\ne i_1\ne \cdots \ne i_k}\sum_{l=1}^{k}\sum_{\bm{a}_l\in\{0,1\}^{k-1}}
(-1)^{|\bm{a}_l|}\int_0^{\infty}\edr^{-r^{\gamma}(1-\frac{1}{n}-\frac{k}{n})}
\\
& \qquad \times\: \int_0^{\infty} (r+t)^{2\gamma-2}\,\edr^{-\frac{(r+t)^{\gamma}}{n}(2+|\bm{a}_l|)}\,\edr^{-\frac{r^{\gamma}}{n}(k-1-|\bm{a}_l|)}
\:\ddr t\,\ddr r\\
&=\frac{\gamma^2}{n^2}\,
\sum_{i\ne i_1\ne \cdots \ne i_k}\sum_{l=1}^{k}\sum_{\bm{a}_l\in\{0,1\}^{k-1}}
(-1)^{|\bm{a}_l|}\int_0^{\infty}\edr^{-r^{\gamma}(1-\frac{2}{n}-\frac{|\bm{a}_l|}{n})}\\
&\qquad \times\: \int_0^{\infty} (r+t)^{2\gamma-2}e^{-\frac{(r+t)^{\gamma}}{n}(|\bm{a}_l|+2)}\:\ddr t\,
\ddr r
\end{align*}
The change of variable $(x,y)=((r+t)^\gamma,r^\gamma)$ yields
\begin{align*}
M_{k,n}(0)
&= \frac{1}{n^2}
\,\sum_{i\ne i_1\ne \cdots \ne i_k}\sum_{l=1}^{k}\sum_{\bm{a}_l\in\{0,1\}^{k-1}}
(-1)^{|\bm{a}_l|}\int_0^{\infty}y^{\frac{1}{\gamma}-1}\edr^{-y(1-\frac{2}{n}-\frac{1}{n}|\bm{a}_l|)}\\
&\qquad \times\:\int_y^{\infty} x^{1-\frac{1}{\gamma}}e^{-\frac{x}{n}(2+|\bm{a}_l|)}\:\ddr x\,\ddr y
\end{align*}
Using formulae (3.381.6) and (7.621.3) in \cite{gr}, one finds out that
\begin{align*}
M_{k,n}(0)
&=\frac{1}{n^2}
\,\sum_{i\ne i_1\ne \cdots \ne i_k}\sum_{l=1}^{k}\sum_{\bm{a}_l\in\{0,1\}^{k-1}}
(-1)^{|\bm{a}_l|}\int_0^{\infty}
y^{\frac{1}{2\gamma}-\frac{1}{2}}\,
(n^{-1}(|\bm{a}_l|+2))^{-\frac{3}{2}+\frac{1}{2\gamma}}\\
&\qquad \times\: \edr^{-y(1-\frac{1}{n}-\frac{1}{2n}|\bm{a}_l|)}\:
W_{\frac{1}{2}-\frac{1}{2\gamma},1-\frac{1}{2\gamma}}
\left(y\frac{|\bm{a}_l|+2}{n}\right)\:\ddr y\\
&=
\frac{\gamma}{n^2}\sum_{i\ne i_1\ne \cdots \ne i_k}\sum_{l=1}^{k}\sum_{\bm{a}_l\in\{0,1\}^{k-1}}
(-1)^{|\bm{a}_l|} {}_2F_1\left(2,1;\frac{1}{\gamma}+1;1-\frac{2+|\bm{a}_l|}{n}\right)\\
&= \frac{\gamma}{n^2}\sum_{i\ne i_1\ne \cdots \ne i_k}\sum_{l=1}^{k}\,
\sum_{r=0}^{k-1}(-1)^{r}\binom{k-1}{r}\,
{}_2F_1\left(2,1;\frac{1}{\gamma}+1;1-\frac{2+r}{n}\right)\\
&=\frac{\gamma k(n-1)(n-2)\cdots(n-k)}{n}\sum_{r=0}^{k-1}(-1)^{r}\binom{k-1}{r}{}_2F_1
\left(2,1;\frac{1}{\gamma}+1;1-\frac{2+r}{n}\right)
\end{align*}
Since the Gauss hypergeometric function $ _2F_1$ can be rewritten as 
\[
{}_2F_1\left(2,1;\frac{1}{\gamma}+1;1-\frac{2+r}{n}\right)=\sum_{l=0}^{\infty}\frac{(2)_l (1)_l}{l!(1+\frac{1}{\gamma})_l}\sum_{j=0}^{l}(-1)^j \binom{l}{j} \left(\frac{2+r}{n}\right)^j
\]
the expression of $M_{k,n}(0)$ can be further simplified as follows
\begin{align*}
M_{k,n}(0) 
& = \frac{\gamma k(n-1)(n-2)\cdots(n-k)}{n}\sum_{r=0}^{k-1}(-1)^{r}\binom{k-1}{r}\\
&\qquad\qquad\qquad\times\: \sum_{j=0}^{\infty}\sum_{l=j}^{\infty}\frac{(2)_l (1)_l}{l!(1+\frac{1}{\gamma})_l}(-1)^j \binom{l}{j} \left(\frac{2+r}{n}\right)^j\\
&= \frac{\gamma k(n-1)(n-2)\cdots(n-k)}{n}\sum_{r=0}^{k-1}(-1)^{r}\binom{k-1}{r} \sum_{j=0}^{\infty}(-1)^j a_j \left(\frac{2+r}{n}\right)^j\\
\end{align*}
where
\begin{equation*}
a_j=\sum_{l=j}^{\infty}\frac{(2)_l (1)_l}{l!(1+\frac{1}{\gamma})_l} \binom{l}{j}
\end{equation*}
A simple change of variable $m=l-j$ leads to write $a_j$ as
\begin{eqnarray*}
a_j &=& \sum_{m=0}^{\infty}\frac{(2)_{m+j} (1)_{m+j}}{(m+j)!(1+\frac{1}{\gamma})_{m+j}}\binom{m+j}{j}\\
&=& \sum_{m=0}^{\infty}\frac{(m+j+1)! (m+j)!}{j!(1+\frac{1}{\gamma})_{m+j}}\frac{1}{m!}\\
&=&\frac{(j+1)!}{(1+\frac{1}{\gamma})_j}\sum_{m=0}^{\infty}\frac{(j+2)_m (j+1)_m}{(j+1+\frac{1}{\gamma})_m}\frac{(1)^m}{m!}\\
&=&\frac{(j+1)!}{(1+\frac{1}{\gamma})_j} {}_2F_1(j+2,j+1,j+1+\frac{1}{\gamma},1)\\
\end{eqnarray*}
and, consequently,
\begin{multline*}
M_{k,n}(0)=\frac{\gamma k(n-1)(n-2)\cdots(n-k)}{n}\sum_{r=0}^{k-1}(-1)^{r}\binom{k-1}{r} \sum_{j=0}^{\infty}(-1)^j \frac{(j+1)!}{(1+\frac{1}{\gamma})_j}\\
\times\: {}_2F_1(j+2,j+1,j+1+\frac{1}{\gamma},1) \left(\frac{2+r}{n}\right)^j
\end{multline*}
If one resorts to identity (0.154.6) in \cite{gr}, it follows that
\begin{multline*}
M_{k,n}(0)=\frac{\gamma k(n-1)(n-2)\cdots(n-k)}{n}\sum_{j=k-1}^{\infty}(-1)^j \frac{(j+1)!}{(1+\frac{1}{\gamma})_j}\\ 
\times\: {}_2F_1(j+2,j+1,j+1+\frac{1}{\gamma},1)\: \sum_{r=0}^{k-1}(-1)^{r}\binom{k-1}{r} \left(\frac{2+r}{n}\right)^j
\end{multline*}
Finally, using formula (0.154.5) in \cite{gr} one has
\begin{multline*}
M_{k,n}(0)=\frac{\gamma k(n-1)(n-2)\cdots(n-k)}{n}\left[\frac{1}{n^{k-1}} \frac{k!(k-1)!}{(1+\frac{1}{\gamma})_{k-1}} {}_2F_1(k+1,k,k+\frac{1}{\gamma},1)\right.\\
\left. +o(\frac{1}{n^{k-1}})\right]
\end{multline*}
as $n\to\infty$. 
If $\gamma<\frac{1}{k+1}$ then ${}_2F_1(k+1,k,k+\frac{1}{\gamma},1)=\frac{\Gamma(k+\frac{1}{\gamma})\Gamma(\frac{1}{\gamma}-k-1)}{\Gamma(\frac{1}{\gamma}-1)\Gamma(\frac{1}{\gamma})}$. Otherwise the Gauss hypergeometric function diverges (see paragraph 9.102 in \cite{gr}). After some little algebra the results is proved. \hfill $\square$
\end{proof}

The study of the limiting behavior of $M_{k,n}(0)$ is crucial for understanding the limiting behavior of the moments. Indeed,
$$E(S_n^k)=(-1)^k\left.\phi^{(k)}_{S_n}(s)\right |_{s=0}$$
In particular, we have:
\begin{eqnarray*}
E(S_n)&=& M_{1,n}(0)\\
E(S_n^2)&=& M_{1,n}(0)+M_{2,n}(0)\\
E(S_n^3)&=& M_{1,n}(0)+3M_{2,n}(0)+M_{3,n}(0)\\
E(S_n^4)&=& M_{1,n}(0)+7M_{2,n}(0)+6M_{3,n}(0)+M_{4,n}(0)\\
E(S_n^5)&=& M_{1,n}(0)+15M_{2,n}(0)+25M_{3,n}(0)+10M_{4,n}(0)+M_{5,n}(0)\\
\cdots &&\\\
\end{eqnarray*}
In general
\begin{equation}\label{allmoments}
E(S_n^k)=a_1^{(k)}M_{1,n}(0)+\cdots + a_k^{(k)}M_{k,n}(0)
\end{equation}
where 
\begin{equation}\label{ricor}
\begin{array}{ll}
a_1^{(k)} &=1\\
a_l^{(k)} &=a_{l-1}^{(k-1)}+la_l^{(k-1)}\qquad l=2,\dots, k-1\\
a_k^{(k)} &=1
\end{array}
\end{equation}
The last recursion follows from the fact that 
$$M'_{k,n}(s) = -k M_{k,n}(s) - M_{k+1,n}(s)$$
From proposition \ref{crucial} and equation (\ref{allmoments}), we have the following theorem.
\begin{thm}
If the $(p_1,\ldots,p_n)$ are determined by normalizing the increments of a $\gamma$-stable subordinator with $t_i-t_{i-1}=1/n$ in \eqref{eq:lapl_w} for each $i \in \{1, \ldots, n\}$, then
$$
\lim_{n \rightarrow \infty} E(S_n^k) = \left\{ \begin{array}{ll}
\sum_{l=1}^k \frac{(l!)^2}{(\frac{1}{\gamma}-l-1)_{l}} a_l^{(k)}&\mbox{ if }\gamma<\frac{1}{k+1}\\ \infty &{\mbox{otherwise}}\\
\end{array}\right.
$$
\end{thm}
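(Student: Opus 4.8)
The plan is to feed the limits from Proposition~\ref{crucial} into the finite decomposition \eqref{allmoments}, the only preliminary work being to pin down the signs of the quantities involved. First I would record two elementary facts. (i) For each finite $n$ the search cost $S_n$ takes values in $\{1,\dots,n\}$, so every moment is finite; moreover, comparing \eqref{allmoments} and \eqref{ricor} with the classical identity $x^k=\sum_{l=1}^k S(k,l)\,x(x-1)\cdots(x-l+1)$, one sees that $a_l^{(k)}$ is the Stirling number of the second kind $S(k,l)$ and that $M_{l,n}(0)=\E[S_n(S_n-1)\cdots(S_n-l+1)]$, which is also consistent with the recursion $M'_{k,n}(s)=-kM_{k,n}(s)-M_{k+1,n}(s)$ together with $\E(S_n^k)=(-1)^k\phi_{S_n}^{(k)}(0)$. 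In particular $M_{l,n}(0)\ge 0$ for all $l$ and $n$. (ii) A one-line induction on $k$ in \eqref{ricor} gives $a_l^{(k)}>0$ for all $1\le l\le k$.

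With this in hand the case $\gamma<\frac{1}{k+1}$ is immediate: for every $l\in\{1,\dots,k\}$ one has $\gamma<\frac{1}{k+1}\le\frac{1}{l+1}$, so Proposition~\ref{crucial}, applied with $k$ replaced by $l$, yields $M_{l,n}(0)\to\frac{(l!)^2}{(\frac{1}{\gamma}-l-1)_l}$ as $n\to\infty$. Since \eqref{allmoments} writes $\E(S_n^k)$ as a fixed linear combination, with $n$-independent coefficients, of the finitely many terms $M_{1,n}(0),\dots,M_{k,n}(0)$, one passes to the limit term by term and obtains $\lim_n\E(S_n^k)=\sum_{l=1}^k a_l^{(k)}\frac{(l!)^2}{(\frac{1}{\gamma}-l-1)_l}$, the first branch of the statement.

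For $\gamma\ge\frac{1}{k+1}$, Proposition~\ref{crucial} gives $M_{k,n}(0)\to\infty$. Bounding \eqref{allmoments} from below by discarding all but the last summand and using $a_k^{(k)}=1$ together with the nonnegativity from (i) and (ii), we get $\E(S_n^k)\ge M_{k,n}(0)\to\infty$, which is the second branch. The only non-routine ingredient is the sign statement $M_{l,n}(0)\ge 0$: it is exactly what rules out hidden cancellations in the divergent case, and it is cleanest to justify through the probabilistic reading of $M_{l,n}(0)$ as a descending factorial moment of an $\{1,\dots,n\}$-valued random variable (equivalently, by induction on $k$ from the recursion for $M'_{k,n}$). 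Everything else is bookkeeping, the interchange of limit and sum being legitimate simply because the number $k$ of summands does not depend on $n$.
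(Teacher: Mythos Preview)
Your argument is correct and follows exactly the route the paper itself takes, namely reading the theorem off from Proposition~\ref{crucial} together with the finite decomposition~\eqref{allmoments}. Your identification of $a_l^{(k)}$ with the Stirling numbers $S(k,l)$ and of $M_{l,n}(0)$ with the falling factorial moment $\E[S_n(S_n-1)\cdots(S_n-l+1)]$ is a genuine addition not spelled out in the paper, and it is precisely what supplies the nonnegativity needed to exclude cancellations in the divergent branch $\gamma\ge 1/(k+1)$.
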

The previous theorem allows to calculate all the moments of the limiting search cost distribution in the stable case. For example the second moment is 
\begin{eqnarray*}
\lim_{n \rightarrow \infty} E(S_n^2) 
& = & \left\{ \begin{array}{ll}
\frac{\gamma(1+\gamma)}{(1-3\gamma)(1-2\gamma)}&\mbox{ if }\gamma<\frac{1}{3}\\ \infty &{\mbox{otherwise}}\\
\end{array}\right.\\
\end{eqnarray*}
and the third moment 
\begin{eqnarray*}
\lim_{n \rightarrow \infty} E(S_n^3) 
& = & \left\{ \begin{array}{ll}
\frac{\gamma(1+5\gamma)}{(1-4\gamma)(1-3\gamma)(1-2\gamma)}&\mbox{ if }\gamma<\frac{1}{4}\\ \infty &{\mbox{otherwise}}\\
\end{array}\right.\\
\end{eqnarray*}

%




\end{document}